\let\savedbigtimes\bigtimes
\let\bigtimes\relax
\let\bigtimes\savedbigtimes
\crefname{appsec}{Appendix}{Appendices}
\theoremstyle{plain}
\newtheorem{theorem}{Theorem}[section]
\newtheorem{lemma}[theorem]{Lemma}
\theoremstyle{definition}
\newtheorem{definition}[theorem]{Definition}
\newtheorem*{assumption*}{Assumption}
\theoremstyle{remark}
\crefname{lemma}{Lemma}{Lemmas}
\crefname{theorem}{Theorem}{Theorems}
\crefname{definition}{Definition}{Definitions}
\crefname{fact}{Fact}{Facts}
\crefname{claim}{Claim}{Claims}
\crefname{proposition}{Proposition}{Propositions}
\newcommand{\E}{\mathbb{E}}
\renewcommand{\epsilon}{\varepsilon}
\newcommand{\Q}{\mathbb{Q}}
\newcommand{\QQ}{\mathbb{Q}}
\newcommand{\PP}{\mathbb{P}}
\newcommand{\EE}{\mathbb{E}}
\newcommand{\beq}{\begin{equation}}
\newcommand{\eeq}{\end{equation}}
\DeclareMathOperator{\supp}{supp}
\newcommand{\bA}{\mathbf{A}}
\newcommand{\bY}{\mathbf{Y}}
\newcommand{\bV}{\mathbf{V}}
\newcommand{\bB}{\mathbf{B}}
\newcommand{\cZ}{\mathcal{Z}}
\renewcommand{\emptyset}{\varnothing}
\begin{document}
	
\title{A second moment proof of the spread lemma}

\author[E.\ Mossel, J.\ Niles-Weed, N.\ Sun, I.\ Zadik]{Elchanan Mossel$^{\star\circ}$, Jonathan Niles-Weed$^\dagger$, Nike Sun$^\star$, and Ilias Zadik$^\star$}
\thanks{$^\star$Department of Mathematics, MIT;
$^\circ$MIT Institute for Data, Systems, and Society;
$^\dagger$Center for Data Science \& Courant Institute of Mathematical Sciences, NYU. Email: \texttt{\{elmos,nsun,izadik\}@mit.edu}; \texttt{jnw@cims.nyu.edu}}

\date{\today}

\begin{abstract} 
This note concerns a well-known result which we term the ``spread lemma,'' which establishes the existence (with high probability) of a desired structure in a random set. The spread lemma was central to two recent celebrated results: (a) the improved bounds of Alweiss, Lovett, Wu, and Zhang (2019) on the Erd\H{o}s-Rado sunflower conjecture; and (b) the proof of the fractional Kahn--Kalai conjecture by Frankston, Kahn, Narayanan and Park (2019).  While the lemma was first proved (and later refined) by delicate counting arguments, alternative proofs have also been given, via Shannon's noiseless coding theorem (Rao, 2019), and also via manipulations of Shannon entropy bounds (Tao, 2020). 

In this note we present a new proof of the spread lemma, that takes advantage of an explicit recasting of the proof in the language of Bayesian statistical inference. We show that from this viewpoint the proof proceeds in a straightforward and principled probabilistic manner, leading to a truncated second moment calculation which concludes the proof. The proof can also be viewed as a demonstration of the ``planting trick'' introduced by Achlioptas and Coga-Oghlan (2008) in the study of random constraint satisfaction problems. 
\end{abstract}

\maketitle

\section{Introduction}

 In this note, we present a new proof of a known result which we call the ``spread lemma.''  It was the key technical result behind the recent notable improvements (\cite{sunflower_annals}) in the sunflower conjecture posed by Erd\H{o}s and Rado (\cite{erdos1960intersection}) (see also (\cite{bell2021note})). Moreover, it has been also the key technical step behind the proof of the fractional Kahn--Kalai conjecture (\cite{fracKK_annals}), posed by Talagrand (\cite{MR2743011}) as a relaxation of the Kahn--Kalai conjecture (\cite{kahn2007thresholds}). Finally, we recently used it to obtain a proof of a modification of the second Kahn--Kalai conjecture (\cite{mossel2022second}). 

The spread lemma is proved in \cite{sunflower_annals},
and refined in \cite{fracKK_annals}, by delicate counting arguments. Alternative proofs have also been given by information-theoretic arguments: via the Shannon noiseless coding theorem (\cite{Rao_sunflower}), and also via manipulations of entropic quantities (\cite{Tao_sunflower}). 

In this work, we present a new proof of the spread lemma, inspired by ideas from Bayesian statistical inference. We give an explicit statistical interpretation of the setting, and show that this gives rise to a proof of the spread lemma in a principled and straightforward way. The argument can be viewed as an application of the ``planting trick'' introduced by
\cite{Dim_Amin} in the setting of random constraint satisfaction problems. The specific technique employed in this note is most closely related to recent applications of the planting trick in the context of \emph{``all-or-nothing'' phase transitions} in inference problems \cite{coja2022statistical}.

\subsection{The spread lemma} We first describe the spread property in detail. We work on a universe $X$ of $N$ elements. 
We now define what it means for a random subset to be \emph{spread}:
\begin{definition}\label{d:spread}
For $R>1$, we say that a random subset $\bA\sim \pi$ of $X$ is \emph{$R$-spread} if for every fixed $S \subseteq X$,  
	\begin{equation} \label{spread_cond}
	{\pi}(S \subseteq \bA) \leq \frac{1}{R^{|S|}}\,.
	\end{equation}
We also say that the probability measure $\pi$ is $R$-spread.
\end{definition}
Hence, a random subset of $X$ is spread, as long as it does not contain any fixed subset with (appropriately defined) significant probability. The definition can also be naturally understood as a set-theoretic analogue of a tail condition for real-valued random variables. Let $\bV$ denote the (independent of $\pi$) $p$-biased random subset of $X$ which includes each element independently with probability $p$, and let $\Q_p$ denote the law of $\bV$. The following is the spread lemma: 

\begin{theorem}[spread lemma]\label{thm:main}
Let $\mathscr{A}$ be a collection of subsets of $X$, with each subset of size at most $k$ with $k\ge2$. There exists a universal constant $C$ such that the following holds: if there exists an $R$-spread measure $\pi$ supported on $\mathscr{A}$, then for all $p \ge C(\log k)/R$ the (independent of $\pi$) $p$-biased random subset $\bV\sim\Q_p$ contains an element of $\mathscr{A}$ with probability at least $0.9$. \end{theorem}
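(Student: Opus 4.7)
The plan is to apply the \emph{planting trick} from Bayesian statistical inference, which reduces the theorem to a truncated second moment computation. First I would introduce the planted measure $\QQ^{\mathrm{pl}}$ on pairs $(\bA,\bV)$: sample $\bA\sim\pi$ independently, then sample $\bV\sim\Q_p$ conditioned on $\bA\subseteq\bV$. A direct calculation shows that the $\bV$-marginal of $\QQ^{\mathrm{pl}}$ has density
\[ Z(\bV) \;:=\; \sum_{A\in\mathscr{A}} \pi(A)\,p^{-|A|}\,\one[A\subseteq\bV] \]
with respect to $\Q_p$, so $\{Z>0\}$ is precisely the event that $\bV$ contains some $A\in\mathscr{A}$, and $\E_{\Q_p}[Z]=1$. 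The theorem thus reduces to showing $\Pr_{\Q_p}[Z>0]\ge 0.9$.

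Next I would apply Cauchy--Schwarz in truncated-second-moment form: for any event $E$ depending only on $\bV$,
\[ \Pr_{\Q_p}[Z>0] \;\ge\; \frac{\E_{\Q_p}[Z\,\one_E]^2}{\E_{\Q_p}[Z^2\,\one_E]} \;=\; \frac{\Pr_{\QQ^{\mathrm{pl}}}[E]^2}{\E_{\QQ^{\mathrm{pl}}}[Z\,\one_E]}, \]
where the equality uses the change-of-measure identity $\E_{\Q_p}[Z\cdot f(\bV)] = \E_{\QQ^{\mathrm{pl}}}[f(\bV)]$. The plan is to choose $E$ so that $\Pr_{\QQ^{\mathrm{pl}}}[E]\ge 1/2$ and $\E_{\QQ^{\mathrm{pl}}}[Z\,\one_E]=O(1)$, which yields $\Pr_{\Q_p}[Z>0]\ge\delta$ for some absolute constant $\delta>0$; a standard boosting argument (replacing $\bV$ by the union of $O(1/\delta)$ independent $p$-biased copies, which is itself $p'$-biased with $p'\lesssim p/\delta$) then promotes this to $\Pr\ge 0.9$ at the cost of inflating the universal constant $C$.

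The core moment computation is $\E_{\QQ^{\mathrm{pl}}}[Z] = \E_{A,A'\sim\pi\otimes\pi}[p^{-|A\cap A'|}]$, together with the spread bound $\Pr[|A\cap A'|\ge j]\le\binom{k}{j}R^{-j}$. The naive sum $\sum_{j\le k}\binom{k}{j}(pR)^{-j}=(1+(pR)^{-1})^k$ scales like $e^{k/(pR)}=e^{k/(C\log k)}$ in our regime and is thus super-polynomial; however, it is a geometric sum concentrated near $j\asymp k/\log k$ and decaying geometrically beyond a constant multiple of that scale. The truncation event $E$ is accordingly chosen to eliminate configurations with atypical overlap --- for instance, that no two $\mathscr{A}$-sets contained in $\bV$ have intersection much larger than $k/\log k$ --- so that $\E_{\QQ^{\mathrm{pl}}}[Z\,\one_E]$ is effectively the truncated series $\sum_{j\le O(k/\log k)}\binom{k}{j}(pR)^{-j}=O(1)$, while $\Pr_{\QQ^{\mathrm{pl}}}[E]\to 1$ follows from a Markov bound on the same geometric tail.

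The hard part will be engineering $E$ to depend only on $\bV$ (so that the change-of-measure identity applies verbatim) while still excluding precisely the atypical overlapping pairs $(A,A')$ responsible for the blow-up of $\E_{\QQ^{\mathrm{pl}}}[Z]$. This is also where the $\log k$ slack in $p\ge C(\log k)/R$ must be invoked with care: more aggressive truncation would render $E$ atypical under $\QQ^{\mathrm{pl}}$, while weaker truncation would leave $\E_{\QQ^{\mathrm{pl}}}[Z\,\one_E]$ unbounded.
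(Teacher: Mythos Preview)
Your proposal contains a genuine quantitative error that breaks the argument. You claim that
\[
\sum_{j\le O(k/\log k)}\binom{k}{j}(pR)^{-j}=O(1)
\]
when $pR\asymp C\log k$, but this is false: already the $j=1$ term equals $k/(C\log k)$, which diverges. More generally, the ratio of consecutive terms $\binom{k}{j}(pR)^{-j}$ is roughly $k/(j\,C\log k)$, so the terms \emph{increase} until $j\approx k/(C\log k)$ and the maximal term is of order $e^{k/(C\log k)}$. Truncating at $j\le ck/\log k$ therefore keeps essentially the entire sum, not an $O(1)$ piece of it. There is no one-shot truncation event $E$ on $\bV$ that will give you both $\Pr_{\QQ^{\mathrm{pl}}}[E]\ge 1/2$ and $\E_{\QQ^{\mathrm{pl}}}[Z\,\one_E]=O(1)$ at this value of $p$; the second moment of $Z$ is simply too large.

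The paper circumvents exactly this obstacle by \emph{not} attempting to prove $\{Z>0\}$ in one shot. Instead it runs the planting/second-moment argument with $q\asymp 1/R$ (so $qR$ is a large absolute constant, not $C\log k$) and proves a weaker conclusion: under the planted posterior one can find $\bA'\in\mathscr{A}$ with $\bA'\subseteq\bA\cup\bV$ and $|\bA'\cap\bA|\le\delta|\bA|$ for $\delta=(qR)^{-1/3}\ll 1$. The truncated second moment that arises is then $\sum_{\ell>\delta|\bA|}(e|\bA|/(qR\ell))^\ell$, and since $\ell\ge\delta|\bA|$ each term is at most $(e\delta^2)^\ell$, which \emph{does} sum to $O(\delta^2)$. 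This single step shrinks the uncovered residual $\bA'\setminus\bV$ to a $\delta$-fraction of $|\bA|$; iterating it $m\asymp\log k$ times with fresh independent $q$-biased sets drives the residual to zero, and the union of the $m$ sets is $p$-biased with $p\asymp mq\asymp(\log k)/R$. The iteration is where the $\log k$ factor enters, not via a choice of truncation threshold in a single second-moment bound.
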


As mentioned above, proofs of Theorem~\ref{thm:main} (or slight variants thereof) can be found in the prior works  \cite{sunflower_annals, fracKK_annals,Rao_sunflower, Tao_sunflower},  and the result has notable implications.  We refer for instance to \cite{Tao_sunflower} for an exposition of how the improved sunflower bounds \cite{sunflower_annals} can be deduced from Theorem~\ref{thm:main}. The proof of the fractional Kahn--Kalai conjecture, given Theorem~\ref{thm:main}, follows by a linear programming duality argument explained in \cite[Proposition 1.5]{fracKK_annals} (and first observed in \cite{MR2743011}).  In Section~\ref{s:proof} we present our proof of Theorem~\ref{thm:main}, and conclude with some discussions and comparison with prior work in Section~\ref{s:comments}. 

\section{Proof of Theorem~\ref{thm:main}}
\label{s:proof}

In this section we present our proof of Theorem~\ref{thm:main}. The core of the argument is the following intermediate result:
\begin{theorem}\label{thm:medium} 
Suppose $\bA\sim\pi$ is an $R$-spread random subset of $X$. Then there exists a coupling $\PP_p$ of $\bA\sim\pi$, $\bA'\sim\pi$, and $\bV\sim\Q_p$ under which  $\bA,\bV$ are independent,  and it holds that
	\beq\label{iter_exp_mod}
	\EE_{\PP_p} \bigg[\frac{|\bA' \setminus \bV|}{|\bA|}
	\mathbf{1}\{\bA\ne\emptyset \}\bigg]
	\leq \frac{7}{(pR)^{1/3}} \,.
	\eeq
 Moreover,  the marginal law of $\bA'$ is the same as that of $\bA$,  and therefore 
the random set $\bA'$ is also $R$-spread.
\end{theorem}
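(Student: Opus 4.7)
The plan is to construct the coupling $\PP_p$ via the planting trick, realized as a Bayesian posterior-resampling step. I would sample $\bA\sim\pi$ and $\bV\sim\Q_p$ independently, so that $\bA\perp\bV$ holds automatically, and then define $\bA'$ conditionally on $(\bA,\bV)$ as follows: treating the overlap $\bY:=\bA\cap\bV$ as a noisy observation of $\bA$ revealed through $\bV$, sample $\bA'$ from the posterior law $\pi(\cdot\mid\cdot\cap\bV=\bY)$, conditionally independently of $\bA$ given $(\bV,\bY)$. A routine Bayes calculation (averaging the posterior density against the joint law of the observation) verifies that the marginal of $\bA'$ is exactly $\pi$, which furnishes the ``moreover'' clause; moreover $\bA$ and $\bA'$ are conditionally i.i.d.\ (hence exchangeable) given $(\bV,\bY)$, which will be central to the estimate.

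To bound the quantity in~\eqref{iter_exp_mod}, I would run a truncated second-moment argument keyed to the posterior normalizer
\[
g(\bV)\;:=\;\sum_{A\subseteq\bV}\frac{\pi(A)}{p^{|A|}},
\]
which is the Radon--Nikodym derivative of the ``planted'' model (in which $\bV$ is drawn conditional on $\bV\supseteq\bA$) with respect to the null model. A direct computation gives $\E[g(\bV)]=1$, and expanding $g(\bV)^2$ and applying the $R$-spread hypothesis $\pi(\bA\supseteq S)\le R^{-|S|}$ to subsets $S\subseteq \bA_1$ yields
\[
\E[g(\bV)^2]\;=\;\E_{\bA_1,\bA_2\sim\pi\otimes\pi}\!\left[p^{-|\bA_1\cap\bA_2|}\right]\;\le\;\E_{\bA_1\sim\pi}\!\left[\left(1+\tfrac{1-p}{pR}\right)^{|\bA_1|}\right],
\]
so $g(\bV)$ concentrates near $1$ whenever $pR$ is large relative to $k$. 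This bound controls the probability of the ``bad'' event on which the conditional posterior fails to be concentrated on sets close to $\bV$.

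The final step is to truncate on $\{g(\bV)\le M\}$ for a threshold $M$ to be chosen. On the complement, Markov applied to $g(\bV)^2$ gives probability $\le\E[g^2]/M^2$, controlled by the display above; on the truncation event, the factor $1/|\bA|$ (combined with the exchangeability $\bA\stackrel{d}{=}\bA'$ conditionally on $(\bV,\bY)$) lets a Cauchy--Schwarz estimate using the spread condition produce a main contribution of order $M/(pR)^{1/2}$. Balancing the bad-event term $M^{-2}$ against this main term gives the optimal $M\sim(pR)^{1/3}$ and total bound $O((pR)^{-1/3})$, with the explicit constant $7$ absorbing bookkeeping. I anticipate two obstacles: first, preserving the marginal $\bA'\sim\pi$ through the truncation (a naive truncated posterior tilts the marginal, and the standard remedy is to interpolate between the posterior-resampled $\bA'$ and a fallback independent sample from $\pi$ in a way that exactly restores the marginal while keeping the expectation small); and second, carefully arranging the Cauchy--Schwarz so that the $1/|\bA|$ denominator is absorbed against $\bA'$-side contributions via exchangeability, which is what makes the bound independent of $k$.
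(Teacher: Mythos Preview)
Your coupling is built on the wrong observation. You take $\bY=\bA\cap\bV$ and resample $\bA'$ from $\pi(\,\cdot\mid\cdot\cap\bV=\bY)$; but then the only constraint on $\bA'$ is $\bA'\cap\bV=\bA\cap\bV$, so $\bA'\setminus\bV$ can be an \emph{arbitrary} subset of $X\setminus\bV$ consistent with $\pi$, with no relation to $\bA$ whatsoever. The paper instead observes $\bY=\bA\cup\bV$ and resamples $\bA'$ from the posterior given $\bY$; this forces $\bA'\subseteq\bA\cup\bV$, hence $\bA'\setminus\bV\subseteq\bA\cap\bA'$, which is what reduces \eqref{iter_exp_mod} to an overlap estimate. (Your later normalizer $g(\bV)=\sum_{A\subseteq\bV}\pi(A)p^{-|A|}$ is in fact the likelihood ratio for this union model, not for your intersection model, so the two halves of your plan are already mismatched.)

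The second gap is in how you control the denominator. Your bound $\E[g(\bV)^2]\le\E_{\bA_1}\big[(1+\tfrac{1-p}{pR})^{|\bA_1|}\big]$ is correct but useless here: it is of order $(1+1/(pR))^{k}$, which is exponential in $k$ unless $pR\gg k$, whereas the theorem must hold for $pR$ a fixed large constant (this is exactly what the iteration to Theorem~\ref{thm:main} needs). The paper explicitly shows the untruncated second moment fails for perfect matchings. The actual mechanism is different and simpler: since $\cZ_{\bY}$ is the Radon--Nikodym derivative of the planted law of $\bY$ against the null, one has $\PP_p(\cZ_{\bY}\le\epsilon)\le\epsilon$ in one line, with no second-moment input. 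The truncation then happens not on the normalizer but on the \emph{overlap}: one bounds $\E_{\PP_p}[\cZ_{\bY}(\bA,\delta)]$, the part of the numerator coming from $A'$ with $|A'\cap\bA|>\delta|\bA|$, and this is where the $R$-spread hypothesis enters via $\binom{|\bA|}{\ell}R^{-\ell}p^{-\ell}$. Your two ``anticipated obstacles'' are then non-issues: the truncation is only in the analysis, never in the definition of $\bA'$ (so the marginal is automatically $\pi$), and the $1/|\bA|$ is handled trivially by $|\bA'\setminus\bV|\le|\bA\cap\bA'|\le|\bA|$ rather than by Cauchy--Schwarz or exchangeability.
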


It is well known that (variants of) Theorem~\ref{thm:medium} imply Theorem~\ref{thm:main}; for completeness we give an argument that Theorem~\ref{thm:medium} implies Theorem~\ref{thm:main} in \S\ref{sec:implic}. We now proceed with the proof of Theorem~\ref{thm:medium}.

\subsection{The planted model}

We define the following ``planted model'': we first sample a random subset of $X$ according to $\pi$, denoted $\bA\sim\pi$, which we view as the (hidden) ``signal'' drawn from the prior $\pi$. We then sample \emph{independently} from $\bA$ the ``noise'' $\bV \sim \Q_p$. The ``observation'' is $\bY=\bA \cup \bV$. We then let $\bA'$ be sampled from the \emph{posterior distribution} of $\bA$ given $\bY$, that is, from the conditional law of $\bA$ given $\bY$. We let $\PP_p$ denote the resulting {joint} law of $(\bA,\bY,\bA')$; this will be the coupling mentioned in the statement of Theorem~\ref{thm:medium}. From Bayes's rule, the marginal laws of $\bA$ and $\bA'$ under $\PP_p$ must be the same, so $\bA'$ is also marginally distributed as $\pi$ under $\PP_p$. In particular, $\bA'$ is also $R$-spread and has the same support as $\bA$. Moreover, almost surely under $\PP_p$, we must have $\bA'\subseteq \bY=\bA \cup \bV$, therefore $\bA' \setminus \bV \subseteq \bA \cap \bA'.$ 

Abbreviate $\delta\equiv 1/(pR)^{1/3}$. Note that to prove Theorem~\ref{thm:medium} we may assume $7\delta<1$; otherwise, since $\bA' \setminus \bV \subseteq \bA \cap \bA'\subseteq\bA$, the result trivially holds. Moreover, since $\bA' \setminus \bV \subseteq \bA \cap \bA'$, it suffices to prove 
    \beq\label{iter_prob_mod} 
    \PP_p\Big(
    |\bA' \cap \bA | > \delta|\bA|\Big) 
    \le \frac{6}{(pR)^{1/3}} =  6\delta\,,
    \eeq
    from which it will follow that
    \beq\label{iter_exp_2_mod}
    \EE_{\PP_p} \bigg[\frac{|\bA' \cap \bA |}{|\bA|} \mathbf{1}\{\bA\ne\emptyset \}\bigg]
    \le \delta +  6\delta = 7\delta\,.
    \eeq

\subsection{The planting trick, and reduction to a second moment bound}
\label{ss:planting.trick}
We will compare the planted model (described above) with the ``null model'' where $\bY=\bV\sim\Q_p$. Define
	\begin{equation}\label{e:Z.Y}
	\cZ_{\bY}
	=\sum_{A'} \pi(A')
	\frac{\PP_p(\bY\,|\,\bA=A')}{\Q_p(\bY)}
	= \E_{\bA\sim\pi}\bigg[
	\frac{\PP_p(\bY\,|\,\bA)}{\Q_p(\bY)}\bigg]\,.
	\end{equation}
Let $\cZ_{\bY}(\bA,\delta)$ denote the contribution to the above sum from sets $A'$ with $|\bA\cap A'| > \delta |\bA|$. We use these quantities to re-express the bound \eqref{iter_prob_mod} in terms of the null model: 

\begin{lemma}\label{l:planted.to.null}
The bound \eqref{iter_prob_mod} is equivalent to
    \begin{equation}\label{eq:goal_1}
    \EE_{\PP_p}\bigg( 
    \frac{\cZ_{\bY}(\bA,\delta)}{\cZ_{\bY}}
    \bigg) \leq 6\delta\,. 
    \end{equation}
    
\begin{proof}
Under the planted model, the law of $\bY$ given $\bA$ can be written explicitly as
	\[
	\PP_p(\bY\,|\,\bA)
	=\frac{\mathbf{1}\{\bA\subseteq \bY\} \Q_p(\bY)}{p^{|\bA|}}\,.
	\]
The marginal law of $\bY$ under $\PP_p$ is therefore given by
	\begin{equation}\label{eq_plantedvsnull_def}
	\PP_p(\bY)
	= \sum_A\PP_p(\bA=A,\bY)
	=\Q_p(\bY) \sum_A
	\frac{\pi(A)\mathbf{1}\{A\subseteq \bY\} }{p^{|A|}}
	=\Q_p(\bY)  \cZ_{\bY}\,.
	\end{equation}
Then, by applying Bayes's rule, we obtain
	\[
	\PP_p(\bA'=A'\,|\,\bY)
	= \frac{\PP_p(\bA=A',\bY)}{\PP_p(\bY)}
	= \frac{1}{\cZ_{\bY}}
	\frac{\pi(A')\mathbf{1}\{A'\subseteq \bY\} }{p^{|A'|}}\,.
	\]
It follows that we can express
	\begin{equation}\label{e:cZ.delta}
	\PP_p\Big(|\bA'\cap\bA| > \delta|\bA| \,\Big|\,\bA,\bY\Big)
	=\sum_{A'}
\frac{ \mathbf{1}\{|A' \cap \bA| 
	> \delta|\bA| \} }{\cZ_{\bY}}
\frac{
 \pi(A')\mathbf{1}\{ A'\subseteq\bY\}}{p^{|A'|}} 
	=
	\frac{\cZ_{\bY}(\bA,\delta)}{\cZ_{\bY}}\,,
	\end{equation}
and the claim follows.
\end{proof}
\end{lemma}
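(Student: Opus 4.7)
The plan is to prove the stronger statement that the two quantities in \eqref{iter_prob_mod} and \eqref{eq:goal_1} are in fact equal, by identifying $\cZ_{\bY}(\bA,\delta)/\cZ_{\bY}$ with the conditional probability $\PP_p(|\bA' \cap \bA| > \delta|\bA| \mid \bA, \bY)$. The argument is a direct application of Bayes's rule, with $\cZ_{\bY}$ emerging naturally as the Radon--Nikodym derivative of the planted marginal law of $\bY$ with respect to the null law $\Q_p$.

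The steps proceed in order as follows. First, compute the conditional law $\PP_p(\bY \mid \bA)$: since $\bY = \bA \cup \bV$ with $\bV \sim \Q_p$ independent of $\bA$, elementary bookkeeping of the factors $p$ and $1-p$ yields $\PP_p(\bY \mid \bA) = \mathbf{1}\{\bA \subseteq \bY\}\Q_p(\bY)/p^{|\bA|}$. Summing over $\bA$ using the definition \eqref{e:Z.Y} of $\cZ_{\bY}$ gives the marginal $\PP_p(\bY) = \Q_p(\bY)\cZ_{\bY}$. Second, apply Bayes's rule to obtain the posterior $\PP_p(\bA = A' \mid \bY) = \pi(A')\mathbf{1}\{A' \subseteq \bY\}/(p^{|A'|}\cZ_{\bY})$; since $\bA'$ is by construction a fresh sample from this posterior, it is conditionally independent of $\bA$ given $\bY$, and the same formula also gives $\PP_p(\bA' = A' \mid \bA, \bY)$. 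Third, sum this posterior over $A'$ in the event $\{|A' \cap \bA| > \delta|\bA|\}$, which by the definition of $\cZ_{\bY}(\bA,\delta)$ is precisely $\cZ_{\bY}(\bA,\delta)/\cZ_{\bY}$. Finally, take expectation over $(\bA,\bY)$ under $\PP_p$ to obtain the desired identity, from which the equivalence of \eqref{iter_prob_mod} and \eqref{eq:goal_1} is immediate.

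The only real obstacle is careful bookkeeping: tracking the factor $p^{-|\bA|}$ arising when $\bV$ is required to cover $\bA$, and the indicator $\mathbf{1}\{\bA \subseteq \bY\}$ that restricts the sum defining $\cZ_{\bY}$ to sets compatible with the observation. Conceptually, the lemma is merely a formal translation between the planted-model and null-model viewpoints, so no further ideas are needed.
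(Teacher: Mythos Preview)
Your proposal is correct and follows essentially the same route as the paper: compute $\PP_p(\bY\mid\bA)$, sum to obtain $\PP_p(\bY)=\Q_p(\bY)\cZ_{\bY}$, apply Bayes's rule for the posterior of $\bA'$, and identify the conditional probability $\PP_p(|\bA'\cap\bA|>\delta|\bA|\mid\bA,\bY)$ with $\cZ_{\bY}(\bA,\delta)/\cZ_{\bY}$ before taking expectation. Your explicit mention of the conditional independence of $\bA'$ and $\bA$ given $\bY$ is a point the paper leaves implicit, but otherwise the arguments are the same.
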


The next step is our main application of the ``planting trick'': the observation is simply that since \eqref{eq_plantedvsnull_def} says $\cZ_{\bY}$ is the Radon--Nikodym derivative between the planted and null laws of $\bY$, it follows that it is unlikely to be small under the planted model. Thus, to show the desired bound \eqref{eq:goal_1}, it suffices to bound the expectation of $\cZ_{\bY}(\bA,\delta)$ under the planted model. The following lemma formalizes this:

\begin{lemma}\label{lem:planting_first}
The bound \eqref{eq:goal_1} from Lemma~\ref{l:planted.to.null} is implied if we can show
	\begin{equation}\label{eq:goal_2}
    \EE_{\PP_p}\Big(\cZ_{\bY}(\bA,\delta)
 	\Big)
    \leq 6\delta^2\,. 
    \end{equation}

\begin{proof}We saw in \eqref{eq_plantedvsnull_def}
that $\PP_p(\bY)=\Q_p(\bY)\cZ_{\bY}$, i.e., $\cZ_{\bY}$ is the Radon--Nikodym derivative between the planted and null laws of $\bY$. It follows that
	\[
	\PP_p( \cZ_{\bY} \leq \epsilon )   
	=\E_{\Q_p}\bigg( \frac{\PP_p(\bY)}{\Q_p(\bY)} \mathbf{1}\{
    	\cZ_{\bY}\le\epsilon \}\bigg) 
    	= \E_{\Q_p}\Big( \cZ_{\bY}
    	\mathbf{1}\{\cZ_{\bY}\le\epsilon\}\Big)
    \le\epsilon\,.\]
Setting $\epsilon=6^{1/2}\delta$ and combining with \eqref{eq:goal_2} gives
	\[\EE_{\PP_p}\bigg(
    \frac{\cZ_{\bY}(\bA,\delta)}{\cZ_{\bY}} \bigg)
	\leq
	\epsilon
	+\frac{\EE(
 	\cZ_{\bY}(\bA,\delta)
   	)}{\epsilon}
    \le 6\delta\,, \]
where we used that $\cZ_{\bY}(\bA,\delta) \le \cZ_{\bY}$. This shows that
\eqref{eq:goal_2} implies \eqref{eq:goal_1}, as claimed.
\end{proof}
\end{lemma}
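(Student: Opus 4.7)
The plan is to exploit the identity $\PP_p(\bY) = \Q_p(\bY)\cZ_{\bY}$ from \eqref{eq_plantedvsnull_def}, which identifies $\cZ_{\bY}$ as the Radon--Nikodym derivative of the planted law with respect to the null law on $\bY$. This is the heart of the ``planting trick'': it means that while $\cZ_{\bY}$ may in principle fluctuate wildly, its tail on the low end is automatically controlled under $\PP_p$, and so the denominator of the ratio in \eqref{eq:goal_1} cannot conspire against us too often.

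Concretely, the preliminary observation I would record is that, for any threshold $\epsilon > 0$, a one-line change-of-measure computation gives
\[
\PP_p(\cZ_{\bY} \le \epsilon) = \EE_{\Q_p}\!\left[\cZ_{\bY}\,\mathbf{1}\{\cZ_{\bY}\le \epsilon\}\right] \le \epsilon.
\]
This is the standard fact that a likelihood ratio $L = d\mu/d\nu$ satisfies $\mu(L \le \epsilon) \le \epsilon$, applied with $\mu=\PP_p$ and $\nu=\Q_p$.

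Given this, the argument proceeds by splitting the expectation \eqref{eq:goal_1} according to whether $\cZ_{\bY}$ is above or below a threshold $\epsilon$ to be chosen. On the event $\{\cZ_{\bY} \le \epsilon\}$, I would bound the ratio trivially by $\cZ_{\bY}(\bA,\delta)/\cZ_{\bY} \le 1$ (since $\cZ_{\bY}(\bA,\delta)$ is visibly a subsum of $\cZ_{\bY}$), and absorb the factor into the probability $\le \epsilon$ from the previous step. On the event $\{\cZ_{\bY} > \epsilon\}$, I would use $1/\cZ_{\bY} < 1/\epsilon$, so that the expected value of the ratio on this event is at most $\EE_{\PP_p}[\cZ_{\bY}(\bA,\delta)]/\epsilon \le 6\delta^2/\epsilon$ by the hypothesis \eqref{eq:goal_2}. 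Adding the two contributions gives $\epsilon + 6\delta^2/\epsilon$, and optimizing at $\epsilon = \sqrt{6}\,\delta$ yields $2\sqrt{6}\,\delta \le 6\delta$, which is exactly \eqref{eq:goal_1}.

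There is no real obstacle: once one notices that \eqref{eq_plantedvsnull_def} makes $\cZ_{\bY}$ a likelihood ratio, the rest is a standard truncation-plus-Markov argument. The only point requiring care is to ensure that the truncation level $\epsilon$ is chosen to be proportional to $\delta$ (not, say, a fixed constant), so that the final bound comes out linear in $\delta$ rather than merely $o(1)$; beyond that, the computation is essentially forced by the AM--GM-style balancing of the two terms.
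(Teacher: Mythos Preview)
Your proposal is correct and follows essentially the same approach as the paper's proof: both use the change-of-measure identity from \eqref{eq_plantedvsnull_def} to bound $\PP_p(\cZ_{\bY}\le\epsilon)\le\epsilon$, then split the expectation at the threshold $\epsilon$, bound the ratio by $1$ on the low event and by $\cZ_{\bY}(\bA,\delta)/\epsilon$ on the high event, and optimize at $\epsilon=\sqrt{6}\,\delta$ to obtain $2\sqrt{6}\,\delta\le 6\delta$. The only difference is cosmetic: you spell out the two-event split explicitly, whereas the paper compresses it into a single displayed inequality.
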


\subsection{Truncated second moment calculation}\label{sec:trunc}

In this subsection we conclude the proof of Theorem~\ref{thm:medium} by showing that the bound \eqref{eq:goal_2} reduces to a (tractable) truncated second moment calculation: 

\begin{lemma}\label{l:truncated.second.mmt}
The bound \eqref{eq:goal_2} from Lemma~\ref{lem:planting_first} is equivalent to
\begin{equation}\label{eq:goal_3} 
\mathop{\E}_{\bA\sim\pi} \bigg[
\sum_{\ell > \delta|\bA|} 
\frac{\pi(|\bA_0 \cap \bA|=\ell \,|\, \bA)}{p^\ell}
\bigg]
\leq 6\delta^2\,,
\end{equation}
where $\bA_0\sim\pi$ is an independent copy of $\bA$, so that $(\bA,\bA_0)\sim\pi^{\otimes2}$. 

\begin{proof}Suppose under $\PP_p$ that $\bA_0$ is an independent copy of $\bA$, so that marginally $(\bA,\bA_0)\sim\pi^{\otimes2}$ as in the statement of the lemma.
From the expansion of $\cZ_{\bY}(\bA,\delta)$ implied by \eqref{e:cZ.delta}, the desired bound \eqref{eq:goal_2} can be rewritten as
    \begin{align*}
    \EE_{\PP_p}\bigg(
    \frac{\mathbf{1}\{|\bA_0 \cap \bA| > \delta  |\bA|,
    \bA_0\subseteq \bY\}}{p^{|\bA_0|}} 
     \bigg)
    \le 6\delta^2\,. 
    \end{align*}
Recall that $\bY=\bA \cup \bV, \bV \sim \QQ_p$. Thus, conditional on $\bA$ and $\bA_0$, we have $\bA_0\subseteq \bY$ with probability $p^{|\bA\setminus \bA_0|}$. It follows that the above is equivalent to
\begin{align*}
\mathop{\EE}_{(\bA_0, \bA) \sim \pi^{\otimes 2}} \bigg(
\frac{\mathbf{1}\{ |\bA_0 \cap \bA| > \delta|\bA|\} }
	{p^{|\bA_0 \cap \bA|} }
	  \bigg)
\le 6\delta^2\,. 
\end{align*} This in turn is equivalent to the stated bound
\eqref{eq:goal_3}.
\end{proof}
\end{lemma}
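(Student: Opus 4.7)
The plan is to unfold the definition of $\cZ_\bY(\bA,\delta)$ as a $\pi$-expectation and then to integrate out $\bV$, so that the bound \eqref{eq:goal_2} becomes a pure $\pi^{\otimes 2}$-statement. Since every manipulation will be an equality, the equivalence claimed in the lemma will follow automatically.

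First, I would augment the coupling $\PP_p$ by an extra independent copy $\bA_0 \sim \pi$ of $\bA$. Reading off \eqref{e:cZ.delta}, the quantity $\cZ_\bY(\bA,\delta)$ is precisely a conditional expectation over such an $\bA_0$:
\[
\cZ_\bY(\bA,\delta)
= \EE\bigg[\frac{\mathbf{1}\{|\bA_0\cap\bA|>\delta|\bA|\}\,\mathbf{1}\{\bA_0\subseteq\bY\}}{p^{|\bA_0|}}\,\bigg|\,\bA,\bY\bigg],
\]
so taking outer expectations rewrites \eqref{eq:goal_2} as
\[
\EE_{\PP_p}\bigg[\frac{\mathbf{1}\{|\bA_0\cap\bA|>\delta|\bA|\}\,\mathbf{1}\{\bA_0\subseteq\bY\}}{p^{|\bA_0|}}\bigg]\le 6\delta^2.
\]

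Next, I would condition on $(\bA,\bA_0)$ and integrate out $\bV$. Under $\PP_p$ we have $\bY=\bA\cup\bV$ with $\bV\sim\Q_p$ independent of both $\bA$ and $\bA_0$, so $\{\bA_0\subseteq\bY\}=\{\bA_0\setminus\bA\subseteq\bV\}$ has conditional probability $p^{|\bA_0\setminus\bA|}$. The elementary identity $|\bA_0|-|\bA_0\setminus\bA|=|\bA_0\cap\bA|$ then collapses the $p$-factor, leaving
\[
\EE_{(\bA,\bA_0)\sim\pi^{\otimes 2}}\bigg[\frac{\mathbf{1}\{|\bA_0\cap\bA|>\delta|\bA|\}}{p^{|\bA_0\cap\bA|}}\bigg]\le 6\delta^2.
\]
Finally, conditioning on $\bA$ and stratifying by $\ell=|\bA_0\cap\bA|$ recovers \eqref{eq:goal_3} verbatim, and reversing the chain gives the converse.

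The argument is essentially bookkeeping; the only genuine content is the decomposition $|\bA_0|=|\bA_0\cap\bA|+|\bA_0\setminus\bA|$, which is exactly what is needed for integrating out $\bV$ to convert the weight $1/p^{|\bA_0|}$ into $1/p^{|\bA_0\cap\bA|}$ --- precisely the quantity fed into the truncated second moment computation of the next subsection. I do not anticipate any real technical obstacle; the main thing to watch is the order of integration (first over $\bA_0$ to recognize $\cZ_\bY(\bA,\delta)$, then over $\bV$ to eliminate the $\{\bA_0\subseteq\bY\}$ indicator).
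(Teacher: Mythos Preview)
Your proposal is correct and follows essentially the same route as the paper: introduce an independent $\bA_0\sim\pi$, recognize $\cZ_\bY(\bA,\delta)$ as the conditional expectation of $\mathbf{1}\{|\bA_0\cap\bA|>\delta|\bA|,\ \bA_0\subseteq\bY\}/p^{|\bA_0|}$, then integrate out $\bV$ using $\PP_p(\bA_0\subseteq\bY\mid\bA,\bA_0)=p^{|\bA_0\setminus\bA|}$ and the identity $|\bA_0|-|\bA_0\setminus\bA|=|\bA_0\cap\bA|$. Your write-up is in fact slightly more explicit than the paper's (and you have the exponent $|\bA_0\setminus\bA|$ correctly, where the paper writes $|\bA\setminus\bA_0|$, evidently a typo).
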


\begin{proof}[Proof of Theorem~\ref{thm:medium}]
As established in the preceding subsections (Lemmas~\ref{l:planted.to.null}--\ref{l:truncated.second.mmt}), it suffices to prove \eqref{eq:goal_3}. Conditional on $\bA$, we have
	\[
	\pi\Big(
		|\bA_0 \cap \bA|=\ell \,\Big|\, \bA  \Big)
	= \sum_{S \subseteq \bA, |S|=\ell} \pi (\bA_0 \cap \bA=S 
		\,\Big|\, \bA  \Big)
	\,.\]
Since $\bA_0\sim\pi$ is $R$-spread, it follows from Definition~\ref{d:spread} that 
	\[
	\pi\Big(
		\bA_0 \cap \bA = S \,\Big|\, \bA  \Big)
	\le \pi\Big(S\subseteq \bA_0 \,\Big|\, \bA  \Big)
	= \pi(S\subseteq \bA_0)
	\le \frac{1}{R^{|S|}}
	\]
for all $S\subseteq X$. Substituting this bound into the previous expression gives
	\[\pi\Big(
		|\bA_0 \cap \bA|=\ell \,\Big|\, \bA  \Big)
	\le \binom{|\bA|}{\ell} \frac{1}{R^{|S|}}
	\le \bigg(\frac{e |\bA| }{R\ell} \bigg)^\ell\,,
	\]
where the last step uses a standard bound on the binomial coefficient.

Recalling that $\delta=(pR)^{-1/3} \le 1/7$, we conclude that 
\begin{align*}
	&\sum_{\ell > \delta |\bA|} 
	\frac{\pi(|\bA_0 \cap \bA|=\ell  )}{p^\ell}
	\leq \sum_{\ell > \delta |\bA|} 
		\left(\frac{e |\bA| }{pR\ell }  \right)^{\ell} 
	\le \sum_{\ell > \delta |\bA|} 
		\left(\frac{e  }{pR\delta}  \right)^{\ell} 
	\\
&\qquad  
\leq \sum_{\ell >\delta|\bA|} \left(e \delta^{2} \right)^{\ell} 
\le 2e \delta^{2} \leq 6\delta^{2}\,,
\end{align*}
where we have used that $\ell$ must be a positive integer, and so $\ell\ge1$. This concludes the proof.
\end{proof}

\subsection{Proof of main theorem}
\label{sec:implic} 

We now show how to deduce
Theorem~\ref{thm:main} from Theorem~\ref{thm:medium}. 


\begin{proof}[Proof of Theorem~\ref{thm:main}]
First note that we may assume $\log k\ll R$, since otherwise  for a large enough constant $C$ we will have $C(\log k)/R\ge1$, in which case the theorem holds vacuously. Now let
	\beq\label{e:def.q}
	q = \frac{(700)^3}{R} \in(0,1)\,.
	\eeq
Let $(\bV_i)_{i\ge1}$ be i.i.d.\ samples from $\Q_q$. 
Let $\pi_1\equiv\pi$ be the law of $\bA_1\equiv\bA$.
Applying Theorem~\ref{thm:medium} gives a coupling of $\bA_1$ and $\bV_1$ with $\bB_1\equiv (\bA_1)'$ with the bound \eqref{iter_exp_mod}: writing 
$\bA_2\equiv \bB_1\setminus \bV_1$,
\eqref{iter_exp_mod} says
	\[\EE_{\PP_q} \bigg[\frac{|\bA_2|}{|\bA_1|}
	\mathbf{1}\{\bA_1\ne\emptyset\}\bigg]
	=\EE_{\PP_q} \bigg[\frac{|\bB_1\setminus\bV_1|}{|\bA_1|}
	\mathbf{1}\{\bA_1\ne\emptyset\}\bigg]
	\leq \frac{7}{(qR)^{1/4}}\,.
	\]
We then let $\pi_2$ be the law of $\bA_2$  conditional on $\bV_1$.
 We claim that $\pi_2$ is also $R$-spread, so that this procedure can be iterated to produce an overall coupling $\PP$ of $\bA_1$, $\bB_1=(\bA_1)'$, $\bA_2=\bB_1\setminus\bV_1$, $\bB_2=(\bA_2)'$, and so on, up to $\bA_{m+1}=\bB_m\setminus\bV_m$. Let $\pi_\ell$ denote the law of $\bA_\ell$
 conditional on $\bV_1,\ldots,\bV_{\ell-1}$.  If we suppose inductively that $\pi_{\ell-1}$ is $R$-spread, then we have
	\begin{align*}
	\pi_\ell(S\subseteq \bA_\ell)
	&=\PP\Big(S\subseteq \bB_{\ell-1}\setminus\bV_{\ell-1}
		\,\Big|\, (\bV_i)_{i \le \ell-1} \Big) \\
	&\le \PP\Big(S\subseteq \bA_{\ell-1}
		\,\Big|\, (\bV_i)_{i \le \ell-1} \Big)
	= \pi_{\ell-1}(S\subseteq \bA_{\ell-1}) \le \frac{1}{R^{|S|}}\,,
	\end{align*}
where  the intermediate inequality uses that $\bB_{\ell-1}\subseteq\bA_{\ell-1}\cup\bV_{\ell-1}$, and the last equality uses that $\bA_{\ell-1}$ and $\bV_{\ell-1}$ are independent.  This verifies our claim that all the $\pi_\ell$ are $R$-spread, which implies that the above procedure is well-defined, and leads to the iterated bound (by H\"older's inequality) 
	\begin{align*}
	\E_{\PP}\Big[ |\bA_{m+1}|^{1/m}\Big]
	& = \E_{\PP}
	\bigg[ \bigg(
	|\bA_1|
	\prod_{i=1}^m\bigg\{\frac{|\bA_{i+1}|}{|\bA_i|}
	\mathbf{1}\{\bA_i\ne\emptyset\}\bigg\}
	\bigg)^{1/m}
	\bigg]
	\\
	&
	\le \bigg( k
	\prod_{i=1}^m \E_{\PP}
	\bigg[\frac{|\bA_{i+1}|}{|\bA_i|}\mathbf{1}\{\bA_i\ne\emptyset \}\bigg]
	\bigg)^{1/m}
	\le 
	\frac{7k^{1/m}}{(qR)^{1/3}}
	\stackrel{\eqref{e:def.q}}{=} \frac{k^{1/m}}{100}\,.
	\end{align*}
In particular, as long as $m\ge \log k$, we have $\PP(\bA_{m+1}\ne\emptyset)
\le \E[|\bA_{m+1}|^{1/m}] \le 0.1$. 

Recall again that $\pi_{m+1}$ is the law of $\bA_{m+1}=\bB_m\setminus\bV_m$ conditional on $\bV_1,\ldots,\bV_m$. If $T_{m+1}\in\supp\pi_{m+1}$,  then there exists a sequence
	\[\Big(\bA_1(T_{m+1}),\bB_1(T_{m+1}),\ldots,
	\bA_m(T_{m+1}),\bB_m(T_{m+1})\Big)\,,\]
which occurs with positive probability under $\PP(\cdot\,|\,\bV_1,\ldots,\bV_m)$,  such that $T_{m+1}=\bB_m(T_{m+1})\setminus \bV_m$. Then, for $T_m \equiv \bB_m(T_{m+1})\in\supp\pi_m$,  the same reasoning gives a sequence
	\[\Big(\bA_1(T_m),\bB_1(T_m),\ldots,
	\bA_{m-1}(T_m),\bB_{m-1}(T_m)\Big)\,,\]
which occurs with positive probability under $\PP(\cdot\,|\,\bV_1,\ldots,\bV_{m-1})$,  such that
$T_m = \bB_{m-1}(T_m) \setminus \bV_{m-1}$. Let
$T_{m-1}\equiv \bB_{m-1}(T_m)$ and iterate to obtain
	\[
	T_{m+1}
	= T_m\setminus \bV_m
	= (T_{m-1}\setminus \bV_{m-1}) \setminus \bV_m
	= T_1 \bigg\backslash \bigg( \bigcup_{i=1}^m \bV_i\bigg)
	\]
for some $T_1\in\supp\pi$.  This shows that on the event $\bA_{m+1}=T_{m+1}=\emptyset$, the union of the $(\bV_i)_{i\le m}$ must cover some element $T_1\in\supp\pi$.

Let $\bV$ denote the union of the $(\bV_i)_{i\le m}$, and note that $\bV\sim\Q_p$ with $p=1-(1-q)^m$. We will take $m\ge \log k$ with
 $m\asymp\log k$, so that the assumptions imply
	\[
	mq \stackrel{\eqref{e:def.q}}{\asymp} \frac{\log k}{R} \ll 1\,.
	\]
It follows then that we can take
	\[
	p=1-(1-q)^m \asymp mq \asymp \frac{\log k}{R}\,,
	\]
which proves the claim.
\end{proof}

\section{Comments}\label{s:comments}

\subsection{Comparison with other proofs of Theorem \ref{thm:main}}

All previous proofs establish Theorem~\ref{thm:main}, by first proving some variant of Theorem~\ref{thm:medium}. The analogues of 
Theorem~\ref{thm:medium} in \cite{sunflower_annals,fracKK_annals}
are proved by careful counting arguments.\footnote{\cite{sunflower_annals} provided the first such argument, which was refined by \cite{fracKK_annals} for a stronger bound.} The alternate proof of \cite{Rao_sunflower} constructs an encoding scheme and derives a variant of Theorem~\ref{thm:medium} by the converse of Shannon’s noiseless coding theorem. Another related proof of \cite{Tao_sunflower} does not use counting or encoding arguments, but goes through non-trivial manipulations of Shannon entropy bounds\footnote{It was recently pointed out that the proof of \cite{Tao_sunflower} has a gap, which has been corrected in \cite{Hu_sunflower, Stoeckl_sunflower}.}.

The main contribution of our proof is the explicit recasting of the setting in the language of Bayesian statistical inference, which allows us to go back and forth between planted and null models in a useful way. Indeed, our proof is perhaps closest to the ones of \cite{sunflower_annals, fracKK_annals}, which may be regarded in some sense as first moment calculations under the planted model (although they are not stated as such), implemented by delicate counting arguments. By contrast, we transfer the planted model first moment condition to a null model second moment condition (\S\ref{ss:planting.trick}). Since $\Q_p$ is a simple product measure, the latter can be proved by a direct and simple expansion (\S\ref{sec:trunc}).  


\subsection{Comparison with direct second moment method}\label{sec:2ndMM}

In this final section, for expository purposes
we compare the calculation of \S\ref{sec:trunc} with a more naive second moment bound under the null model $\Q_p$. This would be perhaps the most standard probabilistic approach to attempt to directly prove Theorem~\ref{thm:main}. Under the null model where $\bY=\bV\sim\Q_p$, the quantity $\cZ_{\bY}$ from \eqref{e:Z.Y} and \eqref{eq_plantedvsnull_def} can be rewritten as
	\[
	\cZ_{\bV}
	=
	\sum_A \frac{\pi(A) \mathbf{1}\{A\subseteq \bV\}}{p^{|A|}}\,.
	\]
This is a particularly natural quantity if 
all sets in $\mathscr{A}$ have the same size $k$
(as is the case in many interesting applications),
and $\pi$ is uniform on $\mathscr{A}$: then $Mp^k\cZ_{\bV}$ is simply the number of $A\in\mathscr{A}$ that are contained in the random set $\bV$.  Abbreviating $\cZ\equiv\cZ_{\bV}$, the classical second moment method would seek to prove
	\begin{equation}\label{sec_mm}
	\frac{\EE_{\QQ_p}(\cZ^2)}{(\EE_{\QQ_p}\cZ)^2} 
	\leq \frac{10}{9}\,,
	\end{equation}
which would imply $ \Q_p(\cZ>0)  \geq 0.9$ by the Paley--Zygmund inequality. The desired result Theorem~\ref{thm:main} would follow, since $\cZ>0$ if and only if $\bV$ contains at least one $A\in\mathscr{A}$. To evaluate the left-hand side of \eqref{sec_mm}, note that $\EE_{\QQ_p}\cZ=1$, and
	\[
	\EE_{\QQ_p}(\cZ^2)
	=
	\sum_{A,A'} \frac{\pi(A)\pi(A') p^{|A\cup A'|}}{p^{|A|+|A'|}}
	=\sum_{A,A'} \frac{\pi(A)\pi(A') }{p^{|A \cap A'|}}
	\,.\]
Simplifying the last expression, we find that \eqref{sec_mm} is equivalent to
	\beq\label{2nd_MM}
	\sum_{\ell\ge0}
	\frac{\pi^{\otimes 2}\left( |\bA_0 \cap \bA|=\ell \right) }{p^{\ell}}
	\le \frac{10}{9}\,,
	\eeq
where $(\bA,\bA_0)\sim\pi^{\otimes2}$. Now note that the left-hand side of
\eqref{eq:goal_3} is exactly the left-hand side of
\eqref{2nd_MM} restricted to $\ell> \delta |\bA|$ --- this is the reason we referred to \eqref{eq:goal_3} as a ``truncated second moment bound.'' 

\subsection{The example of perfect matching}

We conclude this subsection by explaining that the unrestricted second moment bound \eqref{2nd_MM} does not always hold under the conditions of Theorem~\ref{thm:main}, which shows that the truncation in \eqref{eq:goal_3} is in fact necessary. For this, suppose $\mathscr{A}$ is the set of all perfect matchings of the complete graph $K_n$ (with $n$ even). In this case, straightforward calculations (following e.g.\ \cite[\S7, eq.~(28)]{fracKK_annals}) show that $R \asymp n$. However, we claim that the condition \eqref{2nd_MM} is \emph{not} satisfied for $p \asymp (\log k)/R  \asymp (\log n)/n$.  To this end, let $\bA_0,\bA\sim\pi^{\otimes2}$ be two independent uniformly perfect matchings, and note that the chance they share at least one edge can be bounded as
	\[
	\pi^{\otimes2}(\bA_0\cap\bA \ne \emptyset)
	\le \E|\bA_0\cap\bA|
	=\bigg( \frac{n}{2}\bigg)^2 \bigg/ \binom{n}{2}
	= \frac{1+o_n(1)}2\,.
	\]
Consequently, if we condition on $\bA_0=\{e_1,\ldots,e_{n/2}\}$, for each $i$ we have
	\[
	\pi\Big (\bA_0\cap\bA = \{e_i\} \,\Big|\, 
	\bA_0=\{e_1,\ldots,e_{n/2}\}\Big)
	\ge \frac{1-o_n(1)}{2n}\,.
	\]
(We have $(1+o_n(1))/n$ probability that $e_i\subseteq\bA$;
conditional on this, the probability that $\bA_0$ and $\bA$ share no additional edges is at least $(1-o_n(1))/2$ by the above observation.) Summing over $1\le i\le n/2$ gives
	\[
	\pi^{\otimes2}(|\bA_0\cap\bA|=1)\ge \frac{1-o_n(1)}{4}\,.
	\]
 It follows that the $\ell=1$ term in \eqref{2nd_MM}  diverges for $p\asymp (\log n)/n$. 

\section*{Acknowledgements} We thank Vishesh Jain for pointing out an error in an earlier version of the presented proof. We thank Youngtak Sohn, Terence Tao, and Lutz Warnke for additional helpful feedback. We also acknowledge the support of Simons-NSF grant DMS-2031883 (E.M., N.S., and I.Z.), the Vannevar Bush Faculty Fellowship ONR-N00014-20-1-2826 (E.M.\ and I.Z.), the Simons Investigator Award 622132 (E.M.), the Sloan Research Fellowship (J.N.W.), and NSF CAREER grant DMS-1940092 (N.S.).

\pagebreak

{\raggedright 
\bibliographystyle{alphaabbr}
\bibliography{pc}

\newcommand{\etalchar}[1]{$^{#1}$}
\begin{thebibliography}{CGH{\etalchar{+}}22}

\bibitem[AC08]{Dim_Amin}
D.~Achlioptas and A.~{Coja-Oghlan}.
\newblock Algorithmic barriers from phase transitions.
\newblock In {\em Proc. 49th FOCS}, pages 793--802, 2008.

\bibitem[ALWZ21]{sunflower_annals}
R.~Alweiss, S.~Lovett, K.~Wu, and J.~Zhang.
\newblock Improved bounds for the sunflower lemma.
\newblock {\em Ann. of Math. (2)}, 194(3):795--815, 2021.

\bibitem[BCW21]{bell2021note}
T.~Bell, S.~Chueluecha, and L.~Warnke.
\newblock Note on sunflowers.
\newblock {\em Discrete Math.}, 344(7):112367, 2021.

\bibitem[CGH{\etalchar{+}}22]{coja2022statistical}
A.~{Coja-Oghlan}, O.~Gebhard, M.~{Hahn-Klimroth}, A.~S. Wein, and I.~Zadik.
\newblock Statistical and computational phase transitions in group testing.
\newblock In {\em Proc. 35th COLT}, pages 4764--4781. PMLR, 2022.

\bibitem[ER60]{erdos1960intersection}
P.~Erd\H{o}s and R.~Rado.
\newblock Intersection theorems for systems of sets.
\newblock {\em J. London Math. Soc.}, 35:85--90, 1960.

\bibitem[FKNP21]{fracKK_annals}
K.~Frankston, J.~Kahn, B.~Narayanan, and J.~Park.
\newblock Thresholds versus fractional expectation-thresholds.
\newblock {\em Ann. of Math. (2)}, 194(2):475--495, 2021.

\bibitem[Hu21]{Hu_sunflower}
L.~Hu.
\newblock Entropy estimation via two chains: Streamlining the proof of the
  sunflower lemma.
\newblock Online post,
  \url{https://theorydish.blog/2021/05/19/entropy-estimation-via-two-chains-streamlining-the-proof-of-the-sunflower-lemma/},
  2021.

\bibitem[KK07]{kahn2007thresholds}
J.~Kahn and G.~Kalai.
\newblock Thresholds and expectation thresholds.
\newblock {\em Combin. Probab. Comput.}, 16(3):495--502, 2007.

\bibitem[MNSZ22]{mossel2022second}
E.~Mossel, J.~{Niles-Weed}, N.~Sun, and I.~Zadik.
\newblock On the second {K}ahn--{K}alai conjecture.
\newblock Preprint arXiv:2209.03326, 2022.

\bibitem[Rao20]{Rao_sunflower}
A.~Rao.
\newblock Coding for sunflowers.
\newblock {\em Discrete Anal.}, 2:1--8, 2020.

\bibitem[Sto22]{Stoeckl_sunflower}
M.~Stoeckl.
\newblock Lecture notes on recent improvements for the sunflower lemma.
\newblock Online post,
  \url{https://mstoeckl.com/notes/research/sunflower_notes.html}, 2022.

\bibitem[Tal10]{MR2743011}
M.~Talagrand.
\newblock Are many small sets explicitly small?
\newblock In {\em Proc.\ 42nd STOC}, pages 13--35. ACM, New York, 2010.

\bibitem[Tao20]{Tao_sunflower}
T.~Tao.
\newblock The sunflower lemma via {S}hannon entropy.
\newblock Online post,
  \url{https://terrytao.wordpress.com/2020/07/20/the-sunflower-lemma-via-shannon-entropy},
  2020.

\end{thebibliography}
}

\end{document}